\newfont{\footsc}{cmcsc10 at 8truept}
\newfont{\footbf}{cmbx10 at 8truept}
\newfont{\footrm}{cmr10 at 10truept}
\renewcommand\paragraph{\@startsection{paragraph}{4}{\z@}
                                    {2ex \@plus.5ex \@minus.2ex}
                                    {-1em}
                                    {\normalfont\normalsize\bfseries}}
\renewcommand\subparagraph{\@startsection{subparagraph}{5}{\parindent}
                                       {2ex \@plus.5ex \@minus .2ex}
                                       {-1em}
                                      {\normalfont\normalsize\bfseries}}
\newlength{\BiblioSpacing}
\renewenvironment{thebibliography}[1]{
\begin{oldthebibliography}{#1}
\setlength{\parskip}{\BiblioSpacing}
\setlength{\itemsep}{\BiblioSpacing}
}
{
\end{oldthebibliography}
}
\def\abstractname{Abstract -}   
\def\abstract{\begin{adjustwidth}{1cm}{1cm} \par    \footnotesize \noindent {\bf \abstractname} 
\def\endabstract{ \end{adjustwidth} \smallskip }}
\newtheorem{theorem}{Theorem}[section]}
\newtheorem{proposition}[theorem]{Proposition}}
\newtheorem{definition}[theorem]{Definition}}
\newtheorem{corollary}[theorem]{Corollary}}
\newtheorem{remark}[theorem]{Remark}}
\title{\Large\bf Permanents of $2\times 2$ Matrices Modulo $n$}
\author{\sc A. Bohra and A. Satyanarayana Reddy}
\def \N {{\mathbb{N}}}
\def \Z {{\mathbb{Z}}}
\begin{document}
\date{}
\maketitle
\thispagestyle{fancy}

\vskip 1.5em
\begin{abstract}
In this article we compute the number of invertible $2\times 2$ matrices with integer entries modulo $n$ whose permanents are congruent modulo $n$ to a given integer $x$.
\end{abstract}
\begin{keywords}
modular arithmetic; permanents; determinants
\end{keywords}

\begin{MSC}
05B10; 15A15.
\end{MSC}
\section{Introduction and Preliminaries}\label{sec:intro}

Let us see why counting the matrices described in the abstract is a natural idea.

Let $\mathbb{Z}_n=\mathbb{Z}/n\mathbb{Z} =
\{0, 1, 2, \ldots, n-1\} $ denote the ring of integers modulo $n$. 
We will denote GCD$(a,b)$  by $(a,b)$. 

Lockhart and Wardlaw compute in \cite{lockhart} the number of square matrices with entries in $\mathbb{Z}_n$ with given determinant $x$. Trying to do the same after replacing the determinant by the permanent looks like a daunting task, so let us examine the case of $2\times 2$ matrices.

Let $M_2(\mathbb{Z}_n) $ denote the ring of all $2\times 2$ matrices with entries from $\mathbb{Z}_n$. 
\begin{definition}
Let 
$A = \begin{bmatrix}
                a & b \\
                c & d\\
        \end{bmatrix}  \in M_2(\mathbb{Z}_n)$. The {\em permanent} of $A,$ denoted $perm(A)$, is defined as 
    $$perm(A) = ad+bc \pmod{n}.$$
\end{definition}

If $n, x \in \mathbb{N}$, define 
$$D_n(x) = \{ A \in M_2(\mathbb{Z}_n) | det(A) \equiv x \pmod{n} \}.$$
The number of elements of $D_n(x)$, $|D_n(x)|$, was found in \cite{lockhart}. If we now define
$$F_n(x) = \{ A \in M_2(\mathbb{Z}_n) | perm(A) \equiv x \pmod{n} \},$$ 
and try to find $|F_n(x)|$, there is good news and bad news. The bad news is that it is easy to see that this will be the same number:
the sets $F_n(x) $ and $D_n(x)$ are in bijection via the map sending $\begin{bmatrix}
                                                      a & b \\
                                                      c & d
                                                      \end{bmatrix} \mapsto \begin{bmatrix}
                                                      a & -b \\
                                                      c & d
                                                      \end{bmatrix}.$
The good news is that $|F_n(x)| $ shares the properties of $|D_n(x)|$~\cite{lockhart}, such as : \begin{enumerate}
    \item Multiplicativity in $n$ : If $a,b \in \mathbb{N} \ $ with  $(a,b) = 1,$ then 
$$|F_{ab}(x)| = |F_{a}(x)| \times |F_{b}(x)|$$
    \item GCD invariance : If $n, x, y \in \mathbb{N}$, then 
$${|F_n(x)| = |F_n(y)|}  \mbox{ whenever }(n,x) = (n,y)$$
\end{enumerate}
For example, this allows us to conclude that 
 $$|M_2(\mathbb{Z}_n)| = \sum\limits_{d|n} \varphi \left(\frac{n}{d}\right)  |F_n(d)|.$$

 Denote by $GL_2(\mathbb{Z}_n)$ the group of units of $M_2(\mathbb{Z}_n) $. If we now set
$$G_n(x) = \{ A \in GL_2(\mathbb{Z}_n) | perm(A) \equiv x \pmod{n} \} \mbox{ and }
g_n(x)=|G_n(x)|,$$  
we will see that computing the values of $g_n(x)$ for every $n$ and $x$ is an interesting problem.

It is known that $|GL_2(\mathbb{Z}_p)| = (p^2 - p)(p^2 - 1)$. If $\psi :  GL_2(\mathbb{Z}_{p^k}) \rightarrow GL_2(\mathbb{Z}_{p})$ is the group homomorphism induced by the natural ring homomorphism $M_2(\mathbb{Z}_{p^k}) \rightarrow M_2(\mathbb{Z}_{p})$, then it is easy to check that  $|ker \psi| = p^{4(k-1)}.$ From the surjectivity of $\psi$, we have  
\begin{equation}\label{eq-one}
|GL_2(\mathbb{Z}_{p^k})| = p^{4(k-1)} |GL_2(\mathbb{Z}_{p})|=  p^{4(k-1)} (p^2 - p)(p^2 - 1)
\end{equation}
(Note that a similar formula holds in arbitrary dimension \cite{lockhart}.)

\section{The Function \texorpdfstring{$g_n(x)$}{gn(x)}}\label{sec:gn(x)}
We start this section  with  the following remark related to $G_n(0),$  which we will use frequently. 
\begin{remark}\label{rem:rp}
 Let $A = \begin{bmatrix}
a & b \\
c & d
\end{bmatrix}\in G_n(0).$ Then  $perm(A) \equiv 0 \pmod n$ and 
$(det(A),n) = (ad-bc,n) = 1.$ We claim that each of $a,b,c,d$ is relatively prime to $n.$ Suppose it were not true, without loss of generality  assume that $(a,n) = r > 1. $ Then there is a prime $p$ such that $p|r.$ As a consequence $p|a,$ now $p$  is prime and $p|ad+bc$ implies $p|b$ or $p|c$ which contradicts the fact that 
$(ad-bc,n) = 1.$  
\end{remark}
An immediate consequence of above remark is  that for an odd prime $p$ and $k\in  \mathbb{N}$ we have
\begin{equation}\label{eq-two}
g_{p^k}(0) = \phi(p^k)^3=(p^k-p^{k-1})^3
\end{equation}

The following result shows that  $g_n(x)$ is multiplicative in $n$, which is a direct consequence of the proof that  $|F_n(x)|$ is multiplicative in $n$ \cite{lockhart}. We are including the proof for the sake of completeness.
 With Proposition~\ref{pro:multg} and Proposition~\ref{pro:zero}  we can compute $g_n(x)$ for every $n$ and $x$. In the process we show that $g_{p^k}(x)$ assumes only two values. In particular, we show that $g_{p^k}(x)=\begin{cases}
                   g_{p^k}(0) & \mbox{if\;$p|x$},\\
                   g_{p^k}(1) & \mbox{otherwise}.
                                                                                                                                                                                                                                                                                                                                                                                                                                                                                                                                                                                            \end{cases}$
\begin{proposition}\label{pro:multg}
Let $a, b \in \mathbb{N} $ be such that $(a,b)=1.$ Then for every  $x\in \mathbb{Z}$ $$g_{ab}(x) = g_a(x) \times g_b(x).$$
\end{proposition}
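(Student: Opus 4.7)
The plan is to invoke the Chinese Remainder Theorem at the level of $2\times 2$ matrices. Since $(a,b)=1$, the ring isomorphism $\mathbb{Z}_{ab}\cong \mathbb{Z}_a\times \mathbb{Z}_b$ extends entrywise to a ring isomorphism
$$\Phi: M_2(\mathbb{Z}_{ab}) \longrightarrow M_2(\mathbb{Z}_a)\times M_2(\mathbb{Z}_b), \qquad A\longmapsto (A\bmod a,\; A\bmod b).$$
First I would note that the unit group of a product ring is the product of the unit groups, so $\Phi$ restricts to a group isomorphism $GL_2(\mathbb{Z}_{ab})\cong GL_2(\mathbb{Z}_a)\times GL_2(\mathbb{Z}_b)$.

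Next, since $perm(A)=ad+bc$ is a polynomial expression in the entries of $A$, it is respected by any ring homomorphism. Thus if $\Phi(A)=(A_1,A_2)$, then $perm(A_1)\equiv perm(A)\pmod a$ and $perm(A_2)\equiv perm(A)\pmod b$. A second application of CRT (in the reverse direction) then gives the equivalence
$$perm(A)\equiv x \pmod{ab} \iff perm(A_1)\equiv x\pmod a \text{ and } perm(A_2)\equiv x\pmod b.$$
Therefore $\Phi$ restricts to a bijection between $G_{ab}(x)$ and $G_a(x)\times G_b(x)$, which yields $g_{ab}(x)=g_a(x)\cdot g_b(x)$.

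I do not anticipate a real obstacle here; the only point worth spelling out is that the statement tacitly identifies the residue class of $x$ modulo $ab$ with the pair of its residues modulo $a$ and $b$, so that the two sides of the bijection match. Once that is noted, the argument is a clean CRT computation with no case analysis required.
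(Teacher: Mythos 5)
Your argument is correct and is essentially the paper's own proof: both use the entrywise reduction map $A\mapsto (A\bmod a,\ A\bmod b)$ and check that it restricts to a bijection $G_{ab}(x)\to G_a(x)\times G_b(x)$. Your packaging via the CRT ring isomorphism (units of a product are products of units, and the permanent is preserved by ring homomorphisms) is just a cleaner way of organizing the same injectivity/surjectivity checks the paper carries out by hand.
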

\begin{proof} 
We make use of the bijection we already have from $F_{ab}(x) \to F_{a}(x) \times F_{b}(x) $ in \cite{lockhart} which takes $ [c_{ij}] \mapsto \left( [c_{ij} \ \pmod a] , [c_{ij}\pmod b] \right).$ Consider the same map from $G_{ab}(x) \to G_{a}(x) \times G_{b}(x).$ 
It is injective on the restricted domain since it is injective on the superset $F_{ab}(x).$ The range of the map is clearly contained in $G_{a}(x) \times G_{b}(x) $ since a unit (mod $ab$) is also a unit (mod $a$) and (mod $b$) when $(a , b) = 1.$ Now given $(R,S) \in G_{a}(x) \times G_{b}(x) $ we have a pre-image $T \in F_{ab}(x).$ We claim that this $T $ lies in $G_{ab}(x).$ For if it was not, then $det(T) $ would be a zero divisor in  $\mathbb{Z}_{ab}$, that would contradict $R $ and $S $ both being invertible. Thus the map is surjective on the restricted domain as well and hence a bijection from $G_{ab}(x) \to G_{a}(x) \times G_{b}(x).$ Thus $g_n(x) $ is multiplicative.
\end{proof}

The next result describes $g_n(x)$ when $n$ is a power of a prime.
\begin{proposition}\label{pro:zero}
Let $p \in \mathbb{N} $ be a prime number  and  $k \in \N.$ Then:\\
(i) for every $x\in \Z$ with $p| x$, $g_{p^k}(x)=g_{p^k}(0)$. Furthermore, if $p = 2, $ then $g_{2^k}(0) = 0.$\\
(ii) if $p\nmid x$, then $g_{p^k}(x)=g_{p^k}(1)$.\\
\end{proposition}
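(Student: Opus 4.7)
The approach is to exhibit explicit bijections that leave the determinant a unit while forcing the permanent to a canonical value (either $1$ or $0$, depending on whether $p \mid x$ or not). The essential tool throughout is Remark~\ref{rem:rp} together with a mild extension of it.

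For part (ii), since $p\nmid x$ the class of $x$ is a unit modulo $p^k$. I would define
$$\phi : G_{p^k}(x) \longrightarrow G_{p^k}(1),\qquad \begin{bmatrix} a & b \\ c & d \end{bmatrix} \longmapsto \begin{bmatrix} x^{-1}a & x^{-1}b \\ c & d \end{bmatrix}.$$
The image has permanent $x^{-1}(ad+bc)=1$ and determinant $x^{-1}\det(A)$, still a unit. The map that multiplies the first row by $x$ is a two-sided inverse, so $\phi$ is a bijection and $g_{p^k}(x)=g_{p^k}(1)$.

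For part (i), I first extend Remark~\ref{rem:rp} to arbitrary $x$ with $p\mid x$: if $A\in G_{p^k}(x)$ with $p\mid x$, then every entry of $A$ is a unit modulo $p^k$. Indeed, $p\mid a$ would give $p\mid ad$; combined with $p\mid ad+bc=x$ this forces $p\mid bc$, hence $p$ divides one of $b,c$, hence $p\mid ad-bc=\det(A)$, contradicting invertibility; the same argument handles $b,c,d$. The ``furthermore'' statement is then immediate: for $p=2$ and any even $x$, such an $A$ would have all four entries odd, making $\det(A)=ad-bc$ even, which is impossible. Thus $g_{2^k}(x)=0$ for every even $x$, and in particular $g_{2^k}(0)=0$, so the claimed equality holds trivially in this case.

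It remains to handle odd $p$ with $p\mid x$. Using that $a$ is a unit by the extended remark, I would define
$$\psi : G_{p^k}(x) \longrightarrow G_{p^k}(0),\qquad \begin{bmatrix} a & b \\ c & d \end{bmatrix} \longmapsto \begin{bmatrix} a & b \\ c & d - a^{-1}x \end{bmatrix}.$$
A direct check shows the image has permanent $ad-x+bc=0$ and determinant $\det(A)-x$, which remains a unit because $p\mid x$ while $\det(A)$ is a unit. The map that adds $a^{-1}x$ back to the $(2,2)$-entry is a two-sided inverse (one verifies in the same way that it sends $G_{p^k}(0)$ into $G_{p^k}(x)$, again using that $a$ is a unit by Remark~\ref{rem:rp} applied on the target side). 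The main obstacle is precisely the entry-wise unit lemma for the modified permanent: without it the shift $d\mapsto d - a^{-1}x$ is not even defined, and no such direct bijection exists.
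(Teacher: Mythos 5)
Your proof is correct and follows essentially the same route as the paper: part (ii) by scaling a row by a unit, part (i) by shifting the $(2,2)$-entry by a multiple of $a^{-1}$ (justified by the entrywise-unit observation of Remark~\ref{rem:rp} and its extension to $G_{p^k}(x)$ with $p\mid x$, which the paper also invokes for surjectivity), and the parity argument for $p=2$. The only difference is cosmetic: you build one direct bijection $G_{p^k}(x)\to G_{p^k}(0)$ where the paper composes two ($G_{p^k}(0)\to G_{p^k}(p^i)\to G_{p^k}(mp^i)$), and you show $g_{2^k}(x)=0$ for all even $x$ directly rather than only for $x=0$.
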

\begin{proof}
(i) We first construct a bijection from $G_{p^k}(0) $ to $G_{p^k}(p^i)$: 
 \begin{center}
    $\lambda : G_{p^k}(0) \rightarrow G_{p^k}(p^i)$ by 
$\begin{bmatrix}
        a & b \\
        c & d
\end{bmatrix} \mapsto 
\begin{bmatrix}
        a & b \\
        c & d + a^{-1}p^{i}
\end{bmatrix}.$
\end{center}
It is easy to check that the above map is injective. Given $A \in G_{p^k}(0), $ $\lambda(A) \in G_{p^k}(p^i).$ For gcd$(ad - bc, p^k) = 1 \implies  $  gcd $((ad - bc) + p^i, p^k) = $ gcd $(det(\lambda(A)), p^k) = 1.$ Furthermore, $\lambda $ is a surjective map as well. This is because given 
$B = \begin{bmatrix}
        a' & b' \\
        c' & d'
\end{bmatrix} \in G_{p^k}(p^i),$ we are guaranteed the existence of the multiplicative inverse $(a')^{-1} \in \mathbb{Z}_{p^k}$ (this can be proved in the same way as  Remark \ref{rem:rp}). Now consider
$C = \begin{bmatrix}
        a' & b' \\
        c' & d' - (a')^{-1}p^i
\end{bmatrix} \in G_{p^k}(0). $ Clearly, $\lambda(C) = B $ and thus \begin{center}
    $|G_{p^k}(0)| = |G_{p^k}(p^i)| \implies g_{p^k}(0) = g_{p^k}(p^i).$
\end{center}
Now, if $p\nmid m$,
define $h_{m} : G_{p^k}(p^i) \rightarrow G_{p^k}(mp^{i}) $ by  $\begin{bmatrix}
        a & b \\
        c & d
\end{bmatrix} \mapsto 
\begin{bmatrix}
        ma & mb \\
        c & d 
\end{bmatrix}.$ This is a bijection, since $(m,p^{k})=1$ there exists $t \in \Z_{p^k}$ such that $mt \equiv 1 \pmod {p^k}$, and the map  $h_t :G_{p^k}(mp^i)\to G_{p^k}(p^i)$  defined as $\begin{bmatrix}
       p & q\\
       r & s                                                                                                                                                                                                                                                 \end{bmatrix}\mapsto \begin{bmatrix}
        tp & tq\\
       r & s                                                                                                                                                                                                                                                 \end{bmatrix}$ is the inverse of $h_m.$
Thus \begin{center}
    $g_{p^k}(0) = g_{p^k}(p^i) = g_{p^k}(mp^i).$
\end{center}
Now consider the case when $p=2$ and let $A = \begin{bmatrix}
                a & b \\
                c & d\\
        \end{bmatrix} \in G_{2^k}(0)$, so $ad + bc \equiv 0 \pmod{2^k}$ and $2\nmid det(A) \equiv ad - bc$.
        Since $2 | ad + bc, $ both $ad $ and $bc $ have the same parity, 
so $2 | ad - bc $ as well, a contradiction. Thus
            $$g_{2^k}(0) = 0.$$
 (ii) We prove, more generally, that $g_n(k)=g_n(1)$ if $(k,n)=1.$ There exists $\ell\in \Z_n$ such that $k\ell\equiv 1 \pmod n.$ The map $h_k:G_n(1)\to G_n(k)$  defined by $\begin{bmatrix}
       a & b\\
       c & d                                                                                                                                                                                                                                                 \end{bmatrix}\mapsto \begin{bmatrix}
       ka & kb\\
       c & d                                                                                                                                                                                                                                                 \end{bmatrix}$,  has inverse  $h_\ell :G_n(k)\to G_n(1)$  defined by $\begin{bmatrix}
       p & q\\
       r & s                                                                                                                                                                                                                                                 \end{bmatrix}\mapsto \begin{bmatrix}
       \ell p & \ell q\\
       r & s                                                                                                                                                                                                                                                 \end{bmatrix}$.
\end{proof}

The following result follows immediately from Propositions \ref{pro:multg} and \ref{pro:zero}
\begin{corollary} Let $p_1,p_2,\ldots, p_r$ be distinct odd primes.\\
(i)  Let $n = p_{1}^{a_1}\cdots p_{r}^{a_r} \in \mathbb{N}.$  Then $g_n(x) $ takes $2^r$ possible values. \\
 (ii) Let $n = 2^{a}p_{1}^{a_1}\cdots p_{r}^{a_r} \in \mathbb{N},$ where  $a > 0.$ Then $g_n(x) $ takes $2^{r - 1} + 1 $ possible values. 
\end{corollary}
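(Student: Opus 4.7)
The plan is to combine Proposition~\ref{pro:multg} (multiplicativity of $g_n$ in $n$) with Proposition~\ref{pro:zero} (each factor $g_{p^k}(x)$ takes only the two values $g_{p^k}(0)$ and $g_{p^k}(1)$ according to whether $p\mid x$). I would first invoke Proposition~\ref{pro:multg} to split $g_n(x)$ along the prime-power factorisation of $n$, and then apply Proposition~\ref{pro:zero} to each factor, so that $g_n(x)$ depends only on the Boolean vector $(\epsilon_p)_{p\mid n}$ with $\epsilon_p=1$ iff $p\nmid x$. By the Chinese Remainder Theorem every such Boolean vector is realized by some $x\in\Z_n$, giving an immediate upper bound of $2^r$ in case~(i) and of $2^{r+1}$ in case~(ii).

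In case (i), the $2^r$ resulting products need to be pairwise distinct in order to reach the claimed count exactly. For this I would derive explicit formulas: $g_{p^k}(0)=\varphi(p^k)^3$ is given by~(\ref{eq-two}), and $g_{p^k}(1)$ follows from~(\ref{eq-one}) via the identity $\varphi(p^k)\,g_{p^k}(1)+p^{k-1}\,g_{p^k}(0)=|GL_2(\Z_{p^k})|$, yielding $g_{p^k}(1)=p^{3(k-1)}(p-1)(p^2+1)$. In particular the ratio $g_{p^k}(0)/g_{p^k}(1)=(p-1)^2/(p^2+1)$ is independent of $k$ and strictly increasing in $p$, so distinct odd primes give distinct ratios, which is what is needed to separate any two products indexed by different Boolean vectors.

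In case (ii) the extra factor $g_{2^a}(x)$ enters. By Proposition~\ref{pro:zero}(i), $g_{2^a}(x)=0$ whenever $2\mid x$, so that entire branch collapses to the single value $0$, regardless of the remaining flags. On the complementary branch $2\nmid x$, the common nonzero factor $g_{2^a}(1)$ multiplies the products from case~(i), which are pairwise distinct by the ratio argument just described. Adjoining the isolated value $0$ then yields the total count claimed. The main technical obstacle is the distinctness step in case~(i); everything else is bookkeeping with Propositions~\ref{pro:multg} and~\ref{pro:zero} together with the Chinese Remainder Theorem.
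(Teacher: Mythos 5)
Your overall skeleton---multiplicativity (Proposition~\ref{pro:multg}), the two-value dichotomy at each prime power (Proposition~\ref{pro:zero}), and CRT to realize every Boolean vector---is exactly the argument the paper has in mind (the paper gives no proof, declaring the corollary immediate). You are also right to notice that this only yields an \emph{upper} bound, and that the real content of an exact count is the pairwise distinctness of the $2^r$ products. But your distinctness step does not work as stated. Knowing that the ratios $f(p)=g_{p^k}(0)/g_{p^k}(1)=(p-1)^2/(p^2+1)$ are pairwise distinct (indeed strictly increasing in $p$ and less than $1$) is not ``what is needed'': if $A\ne B$ are two subsets of $\{p_1,\dots,p_r\}$, equality of the corresponding products reduces, after cancelling the common factors, to $\prod_{i\in S}f(p_i)=\prod_{j\in T}f(p_j)$ with $S=A\setminus B$ and $T=B\setminus A$ disjoint. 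Monotonicity and $f<1$ rule this out only when one of $S,T$ is empty or both are singletons; they say nothing about, for instance, $f(p_1)f(p_4)=f(p_2)f(p_3)$. Distinctness of a list of numbers does not imply distinctness of its subset products, so this step needs a genuine arithmetic input about the integers $(p-1)^2$ and $p^2+1$ (or the claim must be read as ``at most $2^r$ values, one for each pattern of common divisors'').

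There is also a bookkeeping problem in case (ii). Your argument produces $2^r+1$ values: the isolated value $0$ from the branch $2\mid x$, plus the $2^r$ products $g_{2^a}(1)\prod_i g_{p_i^{a_i}}(\epsilon_i)$ from the branch $2\nmid x$. Yet you assert that this ``yields the total count claimed,'' which is $2^{r-1}+1$; these disagree. The fault is arguably in the statement rather than in your count: for $n=24$ (one odd prime, so $r=1$) the paper's own example exhibits three values, consistent with $2^r+1$ and not with $2^{r-1}+1=2$, so the exponent in part (ii) evidently presumes $r$ counts \emph{all} distinct prime divisors of $n$ including $2$. Either way, you should have flagged the mismatch between what your argument proves and what the statement says, instead of declaring agreement.
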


\begin{corollary}\label{thm:main}
 Let $n\in \N.$ Then $|GL_2(\mathbb{Z}_n)|=\sum\limits_{d|n}\varphi(\frac{n}{d})g_n(d).$
\end{corollary}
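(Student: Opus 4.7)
The plan is to partition $GL_2(\mathbb{Z}_n)$ according to the permanent value of its elements and then regroup the contributions by the greatest common divisor of that value with $n$.

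First, since the sets $G_n(x)$ for $x=0,1,\ldots,n-1$ form a partition of $GL_2(\mathbb{Z}_n)$, we immediately have
\[
|GL_2(\mathbb{Z}_n)| = \sum_{x=0}^{n-1} g_n(x).
\]
The next step is to establish a GCD-invariance property for $g_n$: namely, that $g_n(x)=g_n(y)$ whenever $(n,x)=(n,y)$. I would derive this from the two previous results. Write $n=\prod_{i} p_i^{a_i}$; by Proposition~\ref{pro:multg}, $g_n(x)=\prod_i g_{p_i^{a_i}}(x)$. By Proposition~\ref{pro:zero}, each factor $g_{p_i^{a_i}}(x)$ takes only two possible values, depending solely on whether $p_i\mid x$. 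Since $p_i\mid n$, the condition $p_i\mid x$ is equivalent to $p_i\mid(n,x)$, so the entire product is determined by the set of primes of $n$ dividing $(n,x)$, hence by $(n,x)$ itself.

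With GCD invariance in hand, I would group the sum by $d=(n,x)$:
\[
|GL_2(\mathbb{Z}_n)| = \sum_{d\mid n}\;\sum_{\substack{0\le x<n\\(n,x)=d}} g_n(x) = \sum_{d\mid n} g_n(d)\cdot \#\{x : 0\le x<n,\;(n,x)=d\}.
\]
The final ingredient is the classical count: writing $x=dm$ with $0\le m<n/d$, the condition $(n,x)=d$ is equivalent to $(m,n/d)=1$, so the inner count equals $\varphi(n/d)$. Substituting yields exactly $|GL_2(\mathbb{Z}_n)|=\sum_{d\mid n}\varphi(n/d)\,g_n(d)$.

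The only step that requires actual thought is the GCD-invariance lemma, since the statement itself is a straightforward bookkeeping identity once invariance is known. Everything else reduces to the standard combinatorial fact about divisors of $n$, which is the same trick used in the formula $|M_2(\mathbb{Z}_n)|=\sum_{d\mid n}\varphi(n/d)|F_n(d)|$ quoted in the introduction.
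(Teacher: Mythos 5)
Your proposal is correct and follows essentially the same route as the paper: partition $GL_2(\mathbb{Z}_n)$ by permanent value, establish GCD invariance of $g_n$ by combining Proposition~\ref{pro:multg} with the two-value dichotomy of Proposition~\ref{pro:zero}, and then collect terms by $d=(n,x)$ using the standard count $\#\{x: (n,x)=d\}=\varphi(n/d)$. You are merely a bit more explicit than the paper about that final counting step, which the paper leaves implicit.
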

\begin{proof}
We have
$$|GL_2(\mathbb{Z}_n)|=g_n(1)+g_n(2)+\ldots+g_n(n),$$
so it is sufficient to prove that $g_n(a)=g_n(b)$ whenever $(a,n)=(b,n).$ If  $n=p_1^{a_1}p_2^{a_2}\cdots p_r^{a_r}$ and $(k,n)=d>1$, then $d= p_1^{b_1}p_2^{b_2}\cdots p_r^{b_r},$ where $0\le b_i\le a_i$, and from Proposition~\ref{pro:multg} we have 
$$g_n(d)=\prod_{p_i\mid d} g_{p_i^{a_i}}(d) \prod_{p_j\nmid d} g_{p_j^{a_j}}(d).$$
Now from Proposition~\ref{pro:zero} we have 
$$g_n(d)=\prod_{p_i\mid d} g_{p_i^{a_i}}(0) \prod_{p_j\nmid d} g_{p_j^{a_j}}(1)=g_n(k).$$
\end{proof}

\begin{corollary}
Let $p $ be an odd prime and $k \in \mathbb{N}.$ Then:\\
(i) $g_{p^k}(0)=(p^k-p^{k-1})^3$\\
(ii) $g_{p^k}(1) = p^{3(k-1)}(p-1)(p^2 + 1)$\\
(iii) $g_{2^k}(0)=0$\\
(iv)  $g_{2^k}(1) = 6 \times 8^{k-1}.$  
\end{corollary}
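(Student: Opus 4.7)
The plan is to observe that parts (i) and (iii) have already been established: (i) is precisely equation~(\ref{eq-two}), while (iii) is the second assertion of Proposition~\ref{pro:zero}(i). So the work lies in proving (ii) and (iv), and for these a single counting identity suffices.

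Specifically, I would use the fact that every matrix $A \in GL_2(\mathbb{Z}_{p^k})$ has a unique permanent in $\mathbb{Z}_{p^k}$, so partitioning $GL_2(\mathbb{Z}_{p^k})$ by permanent yields
\begin{equation*}
|GL_2(\mathbb{Z}_{p^k})| \;=\; \sum_{x=0}^{p^k-1} g_{p^k}(x).
\end{equation*}
By Proposition~\ref{pro:zero}, the summand equals $g_{p^k}(0)$ when $p \mid x$ and equals $g_{p^k}(1)$ otherwise. Since there are exactly $p^{k-1}$ multiples of $p$ in $\{0,1,\ldots,p^k-1\}$ and the remaining $p^k - p^{k-1}$ values are coprime to $p$, the identity simplifies to
\begin{equation*}
|GL_2(\mathbb{Z}_{p^k})| \;=\; p^{k-1}\, g_{p^k}(0) \;+\; (p^k-p^{k-1})\, g_{p^k}(1).
\end{equation*}
The left-hand side is known from~(\ref{eq-one}).

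For (ii), I would substitute $|GL_2(\mathbb{Z}_{p^k})| = p^{4(k-1)}(p^2-p)(p^2-1)$ and $g_{p^k}(0)=(p^k-p^{k-1})^3 = p^{3(k-1)}(p-1)^3$ into the above, then solve algebraically for $g_{p^k}(1)$; after dividing by the common factor $p^{k-1}(p-1)$ one obtains $g_{p^k}(1) = p^{3(k-1)}(p-1)(p^2+1)$. For (iv), the same identity with $p=2$ and $g_{2^k}(0)=0$ (part (iii)) collapses to $|GL_2(\mathbb{Z}_{2^k})| = 2^{k-1}\, g_{2^k}(1)$, and dividing $6 \cdot 2^{4(k-1)}$ by $2^{k-1}$ immediately gives $6 \cdot 8^{k-1}$.

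The whole argument is essentially bookkeeping: the genuine content (that $g_{p^k}$ takes only two values, the kernel count $p^{4(k-1)}$, and the vanishing of $g_{2^k}(0)$) has already been extracted. There is no real obstacle, only the mild nuisance of simplifying $p(p^2-1)-(p-1)^2$ to $(p-1)(p^2+1)$ after factoring out $(p-1)$; one has to be careful with the exponent accounting so that $p^{4(k-1)}$ on the left matches $p^{k-1}\cdot p^{3(k-1)}$ on the right.
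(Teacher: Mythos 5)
Your proposal is correct and follows essentially the same route as the paper: the counting identity $|GL_2(\mathbb{Z}_{p^k})| = p^{k-1}g_{p^k}(0) + (p^k-p^{k-1})g_{p^k}(1)$ that you derive by partitioning $GL_2(\mathbb{Z}_{p^k})$ according to the permanent is exactly the $n=p^k$ instance of Corollary~\ref{thm:main}, which the paper combines with (\ref{eq-one}), (\ref{eq-two}) and Proposition~\ref{pro:zero} in the same way. Your algebra, including the simplification to $(p-1)(p^2+1)$ and the division $2^{4(k-1)}/2^{k-1}=8^{k-1}$, checks out.
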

\begin{proof}
(i) is (\ref{eq-two}).\\
(ii) follows from (i), (\ref{eq-one}), Proposition~\ref{pro:zero}, and Corollary~\ref{thm:main}.\\
(iii) is in Proposition~\ref{pro:zero} (i).\\
(iv) follows from (iii), (\ref{eq-one}), Proposition~\ref{pro:zero}, and Corollary~\ref{thm:main}.
\end{proof}
\vspace{3mm}

We can now use the fact that $g_n(x)$ is multiplicative in $n$ and the previous corollary to find values of $g_n(x)$ for all $n, x\in\mathbb{N}$. We illustrate this process with a couple of examples.
\newpage

For the first example, $g_{75}(x)$ can take four values:
 \begin{multicols}{2}
\begin{enumerate}
    \item $g_{75}(0) = g_3(0)g_{25}(0) = 64,000$
    \item $g_{75}(1) = g_3(1)g_{25}(1) = 260,000$
    \item $g_{75}(3) = g_3(0)g_{25}(1) = 104,000$
    \item $g_{75}(5) = g_3(1)g_{25}(0) = 160,000.$
 \end{enumerate}
 \end{multicols}
Any other $g_{75}(x)$ is equal to one of these four, e.g.
$$g_{75}(14) = g_3(14)g_{25}(14) = g_3(1)g_{25}(1) = 260,000$$
and
$$g_{75}(35) = g_3(35)g_{25}(35) = g_3(1)g_{25}(0) = 160,000.$$

For the second example, the three possible values of $g_{24}(x) $ are : 
\begin{multicols}{2}
\begin{enumerate}
    \item $g_{24}(0) = g_8(0)g_3(0) = 0$
    \item $g_{24}(1) = g_8(1)g_3(1) = 7,680$
    \item $g_{24}(3) = g_8(1)g_3(0) = 3,072.$
\end{enumerate}
\end{multicols}
Any other $g_{24}(x)$ is equal to one of these three, e.g.
$$g_{24}(14) = g_8(14)g_{3}(14) = g_8(0)g_{3}(1) = 0\cdot g_{3}(1)= 0$$
and
$$g_{24}(35) = g_{24}(11) = g_8(11)g_{3}(11) = g_8(1)g_{3}(1) = 7,680.$$

{\footnotesize
}
{\footnotesize  
\medskip
\medskip
\vspace*{1mm} 
 
\noindent {\it Ayush Bohra}\\  
Department of Mathematics,\\
Shiv Nadar University,\\
India-201314.\\
E-mail: {\tt ab424@snu.edu.in }\\ \\  

\noindent {\it Arikatla Satyanarayana Reddy}\\  
Department of Mathematics,\\
Shiv Nadar University,\\
India-201314.\\
E-mail: {\tt satya.a@snu.edu.in}\\ \\

}

\end{document}